\documentclass[a4paper,12pt]{amsart}

\usepackage{tikz}

\newcommand{\PP}{\mathbb{P}}

\newif\ifdetails
\detailstrue
\newcommand{\DETAIL}[1]%
{\ifdetails\par\fbox{\begin{minipage}{0.9\linewidth}\textit{Detail:}
      #1\end{minipage}}\par\fi}
\newcommand{\TODO}[1]%
{\ifdetails\par\fbox{\begin{minipage}{0.9\linewidth}\textbf{TODO:}
      #1\end{minipage}}\par\fi}

\newtheorem{lemma}{Lemma}

\newtheorem{theorem}[lemma]{Theorem}

\theoremstyle{remark}

\newcommand{\old}[1]{{}}

\title{A degree condition for diameter two orientability of graphs }

\author{\'Eva Czabarka, Peter Dankelmann, L\'aszl\'o A. Sz\'ekely }
\address{\'Eva Czabarka and L\'aszl\'o A. Sz\'ekely\\ Department of Mathematics \\ University of South Carolina \\ Columbia, SC 29208 \\ USA, and Visiting Professor\\ Department of Pure and Applied Mathematics \\ University of Johannesburg \\ P.O. Box 524\\ Auckland Park, Johannesburg 2006 \\ South Africa}
\email{\{czabarka,szekely\}@math.sc.edu }
\thanks{The second author was supported in part by 
the National Research Foundation of South Africa, grant number 103553,  
the third author was supported by the NSF DMS, grant number  1600811.}
\address{Peter Dankelmann\\ Department of Pure and Applied Mathematics \\ University of Johannesburg \\ P.O. Box 524\\ Auckland Park, Johannesburg 2006 \\ South Africa}
\email{pdankelmann@uj.ac.za}
\subjclass[2010]{Primary 05C12; secondary 05C20, 05C35}
\keywords{graph, distance, diameter, orientation, oriented diameter}

\begin{document}

\begin{abstract}
For $n \in \mathbb{N}$ let $\delta_n$ be the smallest value
such that every graph of order $n$ and minimum degree at least
$\delta_n$ admits an orientation of diameter two. We show that
$\delta_n=\frac{n}{2} + \Theta(\ln n)$. 
\end{abstract}

\maketitle

\section{Introduction}

Let $G$ be a connected graph. An orientation of $G$ is a digraph 
obtained from $G$ by assigning a direction to every edge of $G$. 
The diameter of strong digraph, i.e., of a digraph in which there is a 
directed path between any two vertices, is the maximum distance between any 
two vertices. The oriented diameter $\overrightarrow{{\rm diam}}(G)$ 
of $G$ is the minimum diameter among all strong orientations of $G$. 
A bridge is an edge of a connected graph whose removal
renders the graph disconnected. 

The well-known Robbin's Theorem \cite{Rob1939} states that every 
bridgeless connected graph has a strong orientation.  For many applications it is 
desirable to have a strong orientation whose diameter is as small as possible. 
The natural question if bridgeless graphs of small diameter necessarily 
have orientations of small diameter was answered in the
affirmative by Chv\'{a}tal and Thomassen \cite{ChvTho1978}, who 
showed the existence of a (least) function $f: \mathbb{N} \rightarrow \mathbb{N}$ 
such that every bridgeless graph of diameter $d$
has an orientation of diameter at most $f(d)$. 
Chv\'{a}tal and Thomassen \cite{ChvTho1978} showed that 
$f(2)=6$, and that  
$\frac{1}{2}d^2 + d \leq f(d) \leq 2d^2 + 2d$ for all $d\in \mathbb{N}$. 
The value of  $f(3)$  is not known, but 
Egawa and Iida \cite{EgaIid2007}, and independently Kwok, Liu and 
West \cite{KwoLiuWes2010} gave the estimate $9 \leq f(3) \leq 11$. 

Upper bounds on $\overrightarrow{{\rm diam}}(G)$ are known 
in terms of other graph parameters, such as 
minimum degree \cite{BauDan2015, Sur2017}, 
maximum degree \cite{DanGuoSur2018}, 
and domination parameters \cite{LatKur2012, FomMatPriRap2004}. 
The oriented diameter has also been investigated for graphs from 
various special graph classes  
\cite{EggNob2009, FomMatPriRap2004, FomMatRap2004, KohTan1996, KohTan1996-2, KohTay1999, 
KohTay2001, GutYeo2002, HuaYe2007}. 
For a survey of results on the diameter of orientations of
graphs up to about year 2000 see \cite{KohTay2002}. 

Of special interest is the question which graphs have an orientation
of diameter two, the smallest possible diameter of an orientation of 
any graph on more than one vertex. It was shown by Chv\'{a}tal and 
Thomassen \cite{ChvTho1978} that the decision problem if a given input 
graph has an orientation of diameter two is NP-complete. 
Hence sufficient conditions for the existence of an orientation
of diameter two are of interest. 
The smallest value $m_n$, so that every graph of order $n$ and size 
at least $m_n$ admits an orientation of diameter two was conjectured
in \cite{KohTay2002} to be $\binom{n}{2}-n+5$ and this conjecture
was proved in \cite{CocCzaDanSze-manu}.
In this paper we present a sufficient condition for an 
orientation of diameter two in terms of  
minimum degree. For $n\in \mathbb{N}$, $n\geq 3$, define 
$\delta_n$ to be the smallest value such that every graph of order
$n$ and minimum degree at least $g(n)$ admits an orientation
of diameter two. 
In view of the fact that for a graph $G$ of order $n$ we need the minimum degree 
to be at least $\frac{n-1}{2}$ for $G$ to guarantee that the diameter 
is not more than two, it is clear that 
$\delta_n \geq \frac{n-1}{2}$.  
It is the aim of this paper to show that  
\[ \delta_n = \frac{n}{2} + \Theta(\ln n). \]

The notation we use in this paper is as follows. $G$ always denotes a 
connected graph on $n$ vertices. The neighborhood of a vertex $v$ of $G$,
i.e., the set of all vertices adjacent to $v$ is denoted by $N_G(x)$, and the degree of $v$ is $|N_G(v)|$.
In a digraph $D$, $N^+_D(v)$ is the set of vertices reached from $v$ by an arc, and $N^-_D(v)$ is the set of vertices that reach $v$ by an arc.
The minimum degree $\delta(G)$ of $G$ is the smallest of 
the degrees of the vertices of $G$. 
The distance from a vertex $u$ to a vertex $v$, i.e., the minimum
length of a $(u,v)$-path in a graph or digraph is 
denoted by $d(u,v)$.  By 
a $2$-path we mean a path of length two. In a given probability space 
we write $\PP[X]$ for the probability of an event $X$, and $\mu$ for the
expected value of a random variable.

\section{Results}

\begin{theorem}  \label{theo:sufficent-for-diam2-orient}
Let $G$ be a graph on $n$ vertices. If 
\[ \delta(G) \geq \frac{n}{2} +\frac{\ln n}{\ln (4/3)} 
   = \frac{1}{2}n + (3.476...) \ln n, \]
then $\overrightarrow{{\rm diam}}(G)=2$.
\end{theorem}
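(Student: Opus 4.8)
The plan is to use the probabilistic method. I would take a random orientation of $G$ by orienting each edge independently in either direction with probability $1/2$, and show that when $\delta(G)$ is at least the stated bound, with positive probability the resulting oriented diameter is exactly two. Since every orientation of a graph on more than one vertex has diameter at least two, it suffices to show that with positive probability, for every ordered pair of distinct vertices $(u,v)$, there is a directed path of length at most two from $u$ to $v$.

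Let me think about the structure. Fix an ordered pair $(u,v)$. There is a directed path of length at most two from $u$ to $v$ either if the edge $uv$ (if present) is oriented from $u$ to $v$, or if there is a common neighbor $w$ of $u$ and $v$ such that the $2$-path $u \to w \to v$ is realized, i.e., $uw$ is oriented from $u$ to $w$ and $wv$ is oriented from $w$ to $v$. For each common neighbor $w$, this happens with probability $1/4$, and these events are independent across distinct common neighbors since they involve disjoint sets of edges. Hence if $u$ and $v$ have $k$ common neighbors, the probability that none of these $2$-paths is realized is at most $(3/4)^k$. The key quantitative point is to lower-bound the number of common neighbors of any two vertices: if $\delta(G) \geq n/2 + c$, then by inclusion-exclusion $|N_G(u) \cap N_G(v)| \geq 2\delta(G) - n \geq 2c$ (subtracting a small correction for $u,v$ themselves when they are adjacent or when one lies in the other's neighborhood).

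The main computation is a union bound. For each ordered pair $(u,v)$ let $A_{u,v}$ be the bad event that there is no directed path of length at most two from $u$ to $v$. Then $\PP[A_{u,v}] \leq (3/4)^k$ where $k$ is the number of common neighbors, so using $2\delta(G) - n \geq \frac{2\ln n}{\ln(4/3)}$ one gets $k \geq \frac{2\ln n}{\ln(4/3)}$ (up to the small correction), whence $(3/4)^k \leq (3/4)^{2\ln n / \ln(4/3)} = n^{-2}$. Summing over all fewer than $n^2$ ordered pairs gives total bad probability strictly less than $1$, so a good orientation exists. The expected main obstacle is handling the correction to the common-neighbor count: when $u$ and $v$ are adjacent, or when $u \in N_G(v)$, the naive bound $2\delta(G)-n$ must be adjusted because $u$ and $v$ themselves may be counted among the neighbors, and I must verify the constant $\ln(4/3)$ survives this adjustment. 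I would treat the adjacent and non-adjacent cases separately, noting that when $uv \in E(G)$ the direct edge gives an extra chance, which compensates for the loss of one or two potential common neighbors, so the bound $(3/4)^k \leq n^{-2}$ still holds with enough room to keep the union bound strictly below $1$.
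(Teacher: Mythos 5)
Your proposal is correct and takes essentially the same approach as the paper: a uniformly random orientation, the inclusion-exclusion bound $|N_G(u)\cap N_G(v)|\geq 2\delta(G)-n$ on common neighborhoods, and a first-moment/union bound over the fewer than $n^2$ ordered pairs, with the same constant $\ln(4/3)$. The correction you anticipate is actually unnecessary—since $u,v\notin N_G(u)\cap N_G(v)$ and $|N_G(u)\cup N_G(v)|\leq n$ holds regardless of adjacency, every common neighbor is a valid midpoint and the bound $2\delta(G)-n$ needs no adjustment—so the paper simply demands a directed $2$-path for every ordered pair and never invokes the direct edge.
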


\begin{proof} 
Assume that the minimum degree of the graph  $G$ is at least $n/2+f(n)$, where 
$f(n)=\frac{1}{\ln (4/3)} \ln n$. 
Fix two arbitrary vertices of $G$, $x$ and $y$. By the inclusion-exclusion formula, 
$|N_G(x)\cap N_G(y)|=|N_G(x)|+|N_G(y)|-|N_G(x)\cup N_G(y)|\geq n+2f(n)-n=2f(n)$. 
 
Assign one of the two possible orientations to every edge of $G$ randomly and 
independently with probability $1/2$. 
Define the random variable $X_{xy}$ to be $1$ if we have 
no directed 2-path from $x$ to $y$, and $0$ if there is such 
a path. Clearly, the expected value of $X_{xy}$ is  
$\PP[X_{xy}=1] = \bigl(\frac{3}{4}\bigl)^{|N(x) \cap N(y)|} 
             \leq \bigl(\frac{3}{4}\bigl)^{2f(n)}$. 
Therefore, for the expected number $\mu$ of ordered pairs
$x,y$ with no directed 2-paths from
$x$ to $y$, we have 
\begin{equation}  \label{eq:expected-value}
\mu=\sum_{x,y \in V(G), x\neq y}  \PP[X_{xy}]  \leq 
     n(n-1)\Bigl(\frac{3}{4}\Bigl)^{2f(n)}
     <  n^2 \Bigl(\frac{3}{4}\Bigl)^{2f(n)}.  
\end{equation}     
Let $X = \sum_{x,y \in V(G), x\neq y} X_{xy}$ be the number of
distinct ordered pairs of vertices that are not joined by a 
directed 2-path.  
Then $X=0$ means that a random orientation has diameter
two.  If $f(n)$ is selected so large that  
\[ n^2  \Bigl(\frac{3}{4}\Bigl)^{2f(n)} \leq 1, \]
then if follows from \eqref{eq:expected-value} that 
$\mu <1$, and so at least one
orientation satisfies $X=0$, implying that 
$\overrightarrow{{\rm diam}}(G)=2$. 
Easy calculations shows that this is indeed the case if 
$f(n)\geq \frac{1}{\ln(4/3)}\ln n$.
\end{proof}

\begin{theorem} \label{theo:sharpness-example}
There exists an infinite family of graphs $G$ of order $n$ and
\[ \delta(G) \geq \frac{n}{2} + \frac{\ln n}{2\ln(27/4)}=\frac{n}{2} + (0.2618...){\ln n}.\]
which do not have an orientation of diameter $2$. 
\end{theorem}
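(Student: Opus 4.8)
The plan is to construct a family of graphs where, no matter how we orient the edges, some ordered pair of vertices fails to have a directed path of length at most two between them. Since the upper bound in Theorem \ref{theo:sufficent-for-diam2-orient} came from a union bound over all pairs, the natural strategy for the matching lower bound is to exhibit a single ``bad'' pair $x,y$ whose common neighborhood is small enough that a directed $2$-path between them cannot always be forced, and simultaneously to block all the alternative short directed routes. Concretely, I would take two special vertices $x$ and $y$ that are non-adjacent and share only a modest common neighborhood of size about $\frac{\ln n}{\ln(27/4)}$, while the rest of the graph is padded (for instance by a clique or a blow-up structure) so that every vertex still has degree at least $\frac{n}{2}+(0.2618\ldots)\ln n$. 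Because $x$ and $y$ are not adjacent and share few common neighbors, the only way to get from $x$ to $y$ in two steps is through one of those few common neighbors $w$, which requires the arcs oriented as $x\to w\to y$.

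Next I would turn the existence of a bad orientation into a counting or probabilistic statement. The key observation is that a directed $2$-path from $x$ to $y$ exists only if \emph{some} common neighbor $w$ receives the arc from $x$ and sends an arc to $y$; I want to show that one can orient the edges (or that a random orientation with positive probability does so) to destroy all such $2$-paths in both directions between the chosen bad pair. If $|N(x)\cap N(y)|=t$, the probability under a uniform random orientation that there is no $x\to y$ $2$-path through these common neighbors is $(3/4)^t$, so the expected contribution is controlled by the same $3/4$ base as in the upper bound; the discrepancy in the constant (the $\ln(27/4)$ versus $\ln(4/3)$) must come from needing to block several pairs at once, or from a more refined event that couples the $x$-side and $y$-side orientations. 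The exponent $2\ln(27/4)$ in the denominator, with $27/4 = (3/2)^3$ appearing, strongly suggests the construction groups common neighbors into triples (or considers three-way interactions), so that blocking a directed route costs a factor like $(4/27)$ per group rather than $(3/4)$ per vertex.

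I would therefore build the bad gadget so that the obstruction to a directed $2$-path is organized in blocks of three. A plausible realization: arrange the common neighbors of $x$ and $y$ so that a $2$-path from $x$ to $y$ is blocked exactly when, within each of $\frac{t}{3}$ triples, a certain local orientation event fails; the probability of failure per triple is $4/27$, and to guarantee no valid orientation exists across all pairs one needs $\left(\frac{t}{3}\text{ independent triples}\right)$ to force $n^{2}\left(\frac{4}{27}\right)^{t/3}>$ something, inverting to $t<\frac{\ln n}{\ln(27/4)}$ up to the factor of $2$ from counting ordered pairs $xy$ and $yx$ symmetrically. The precise combinatorial mechanism by which three common neighbors jointly control a directed path—and how to pad the remaining vertices to meet the degree bound without accidentally creating extra short routes—is where I would spend the most care.

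The hard part, I expect, is the simultaneity: it is easy to kill the $2$-paths for one ordered pair in expectation, but I must produce an actual orientation (or show one exists) in which \emph{every} orientation of the gadget fails for at least one pair, i.e., I need a lower-bound (adversary) argument rather than a probabilistic upper bound. This means I cannot merely compute an expectation; I must argue that the number of ``good'' orientations (those giving diameter two) is strictly smaller than the total, which calls for a counting argument over the structured gadget, or a clever deterministic construction where every orientation provably leaves some pair at distance greater than two. Matching the constant $\frac{1}{2\ln(27/4)}$ exactly—rather than just getting \emph{some} $\Theta(\ln n)$ lower bound—will force the gadget's block structure and the degree padding to be tuned precisely, and verifying that the padding does not inadvertently supply extra directed $2$-paths bypassing the intended obstruction is the most delicate step.
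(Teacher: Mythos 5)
Your proposal has a genuine gap, and in fact its central strategy cannot work. You propose to fix a single ``bad'' pair $x,y$ in advance and build the graph around it. But under the degree hypothesis you must satisfy, inclusion-exclusion forces \emph{every} pair of vertices to have at least $2\delta(G)-n \geq 2(0.2618\ldots)\ln n$ common neighbors; in particular your chosen pair has at least two common neighbors once $n$ is large. Then the orienter (who moves \emph{after} the graph is fixed) simply routes $x \to w_1 \to y$ through one common neighbor and $y \to w_2 \to x$ through another, and your pair is served in both directions no matter how the rest of the graph is oriented. A pre-specified pair can only defeat all orientations if it is non-adjacent with at most one common neighbor, which the degree condition forbids. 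So the bad pair must depend on the orientation: for each orientation one must \emph{find} a pair that it fails. Your final paragraph correctly identifies this as the crux (``every orientation provably leaves some pair at distance greater than two''), but you offer no mechanism for it; and your triple-blocking gadget is guesswork driven by the numerology $27/4 = 3^3/2^2$, which is not where that constant comes from.

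The paper's construction supplies the missing mechanism through massive redundancy plus pigeonhole. Take $H_1 \cong K_N$, $H_2 \cong K_{3k}$, $H_3 \cong K_N$ with $N = \binom{3k}{k}$, and for \emph{every} $2k$-subset $S \subseteq V(H_2)$ dedicate one vertex $v_S \in H_1$ and one vertex $w_S \in H_3$, each joined to all of $S$. Given any orientation, fix $v \in V(H_1)$: among its $2k$ neighbors in $H_2$, either at most $k$ are out-neighbors or at most $k$ are in-neighbors; say at most $k$ are out-neighbors. Then some $2k$-set $S$ avoids $N^+(v)$ entirely, and the dedicated vertex $w_S$ is non-adjacent to $v$ with all common neighbors of $v$ and $w_S$ lying inside $S$ --- none an out-neighbor of $v$ --- so $d(v,w_S) \geq 3$. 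The bad pair $(v,w_S)$ depends on the orientation through $S$; having a witness vertex for every $2k$-subset is exactly what lets the construction answer every orientation. Finally, the constant: $27/4$ has nothing to do with grouping common neighbors into triples. It is the exponential growth rate of $\binom{3k}{k} \sim c\,(27/4)^k/\sqrt{k}$ from Stirling's formula, so $n \approx 2\binom{3k}{k}$ gives $k \approx \ln n/\ln(27/4)$, while the minimum degree, attained on $H_1 \cup H_3$, equals $\binom{3k}{k} + 2k - 1 = \frac{n}{2} + \frac{k}{2} - 1$; the $2$ in the denominator of $\frac{\ln n}{2\ln(27/4)}$ reflects the degree excess $\frac{k}{2}$, not a count of ordered pairs.
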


\begin{proof}
For $k \in \mathbb{N}$ we construct the graph $G_k$ as follows. 
Let $N = {3k \choose k}$. Let $H_1$, $H_2$ and $H_3$ be disjoint
copies of the complete graph $K_N$, $K_{3k}$ and $K_N$, 
respectively. Let $G_k$ be obtained from the disjoint union of  
$H_1$, $H_2$ and $H_3$ by associating each $2k$-subset $S$ of $V(H_2)$ 
with exactly one vertex $v_S$ of $H_1$ and exactly one vertex 
$w_S$ of $H_3$, and adding edges joining $v_S$ and $w_S$ to every
vertex in $S$. 

We now prove that $\overrightarrow{{\rm diam}}(G_k) \geq 3$. 
Let $D_k$ be an arbitrary orientation of $G_k$. Fix a vertex $v \in V(H_1)$. 
Then $|N^+(v)\cap V(H_2)| + |N^-(v)\cap V(H_2)| = 2k$, and
so $|N^+(v)\cap V(H_2)| \leq k$ or $|N^-(v)\cap V(H_2)| \leq k$.
Without loss of generality we assume $|N^+(v)\cap V(H_2)| \leq k$. 
Then $|V(H_2) - (N^+(v)\cap V(H_2))| \geq 2k$, and so 
there exists a set $S \subseteq V(H_2) - (N^+(v)\cap V(H_2))$
with $|S|=2k$. Consider vertex $w_S$. We claim that
\begin{equation} d_{D_k}(v,w_s) \geq 3. \label{*}
\end{equation}
Indeed, $w_S$ and $v$ are non-adjacent in $G_k$ and no
vertex in $N_{G_k}(v) \cap N_{G_k}(w_S)$ is an out-neighbour
of $v$ in $D_k$. Hence there exists no $(v,w_S)$-path of length 
two in $D_k$, and $(\ref{*})$ follows. 

Now $(\ref{*})$ implies that ${\rm diam}(D_k) \geq 3$, and since $D_k$
was arbitrary, we have $\overrightarrow{{\rm diam}}(G_k) \geq 3$. 

We now determine the minimum degree of $G_k$. Let $n_k = |V(G_k)|$. 
Each vertex in $H_1$ or $H_3$ has degree 
${3k \choose k} +2k-1 = \frac{1}{2}n_k + \frac{k}{2}-1$. 
Every vertex in $H_2$ is adjacent to all vertices in $H_2$ 
except itself, and to 
two thirds of the vertices in $H_1$ and $H_3$, and so 
has degree 
$\frac{4}{3} {3k \choose k} + 3k-1=\frac{2}{3}n_k+k-1$. It follows 
that 
\begin{equation}
\delta(G_k)  = \min\bigl( \frac{1}{2}n_k + \frac{k}{2}-1, 
       \frac{2}{3}n_k + k-1\bigl)
        = \frac{1}{2}n_k + \frac{k}{2}-1. \label{**} 
        \end{equation}
Using Stirling's Formula, we obtain for sufficiently large $k$ that
\begin{equation}
n_k = 2 {3k \choose k} + 3k  \leq 2.5 \frac{(3k)!}{k!(2k)!} <
3\frac{ \bigl(\frac{3k}{e}\bigl)^{3k}\sqrt{6k\pi}    }{ \bigl(\frac{k}{e}\bigl)^{k}\sqrt{2k\pi}  \bigl(\frac{2k}{e}\bigl)^{2k}\sqrt{4k\pi}    }=\frac{3^{3/2}}{2}\biggl(\frac{3^{3}}{2^2}\biggl)^k
\frac{1}{\sqrt{k\pi}}.
          \label{***}  \end{equation}
Taking logarithms of both sides of       (\ref{***}), we get 
\[  \ln n_k    <  \ln \bigl(\frac{3^{3/2}}{2\sqrt{\pi}}\bigl) +k \ln (27/4) - (1/2)\ln k.
           \]         
Substituting this into (\ref{**}) yields  
\[ \delta(G_k) - \frac{1}{2}n_k 
      =\frac{k}{2}-1
     >\frac{\ln n_k-\ln (\frac{3^{3/2}}{2\sqrt{\pi}})}{ 2\ln(27/4) }-1  +\frac{1}{4}\ln k 
   > \frac{\ln n_k}{ 2\ln(27/4) }  \]
since for large $k$ the last term dominates the negative constant.
\end{proof}

It follows from Theorems \ref{theo:sufficent-for-diam2-orient} and
\ref{theo:sharpness-example} that the least minimum degree $\delta_n$ that
guarantees the existence of an orientation of diameter two in a graph 
of order $n$ and minimum degree not less than $\delta_n$ satisfies 
$\delta_n=\frac{1}{2}n + \Theta(\ln n)$. 

%
%



\end{document}